\newcommand{\step}{\textit{step}}
\newcommand{\imax}{\textit{imax}}
\newcommand{\tol}{\textit{tol}}
\journal{Applied Mathematics and Computation}
\newcommand{\diag}{\operatorname{diag}}
\newcommand{\inertia}{\operatorname{inertia}}
\newcommand{\assgn}{\ensuremath\mathrel{\mathop:}=}
\newcommand\hpm{\hphantom{-}}
\newtheorem{theorem}{Theorem}[section]
\newtheorem{corollary}[theorem]{Corollary}
\newdefinition{definition}[theorem]{Definition}
\numberwithin{equation}{section}
\numberwithin{figure}{section}
\numberwithin{table}{section}
\begin{document}
\begin{frontmatter}
\title{The antitriangular factorization of skew-symmetric matrices\tnoteref{label1}}
\tnotetext[label1]{This work has been fully supported by Croatian Science Foundation under the project IP-2014-09-3670.}
\author[label2]{Sanja Singer}
\ead{ssinger@fsb.hr}
\address[label2]{University of Zagreb, Faculty of Mechanical Engineering and Naval Architecture, Ivana Lu\v{c}i\'{c}a 5, 10000 Zagreb, Croatia}
\begin{abstract}
  In this paper we develop algorithms for orthogonal similarity
  transformations of skew-symmetric matrices to simpler forms.  The
  first algorithm is similar to the algorithm for the block
  antitriangular factorization of symmetric matrices, but in the case
  of skew-symmetric matrices, an antitriangular form is always
  obtained.  Moreover, a simple two-sided permutation of the
  antitriangular form transforms the matrix into a multi-arrowhead
  matrix.  In addition, we show that the block antitriangular form of
  the skew-Hermitian matrices has the same structure as the block
  antitriangular form of the symmetric matrices.
\end{abstract}
\begin{keyword}
  skew-symmetric matrices \sep
  antitriangular form \sep
  multi-arrowhead matrices \sep
  skew-Hermitian matrices
  \MSC[2010] 15A23 \sep 15B57 \sep 65F30
\end{keyword}
\end{frontmatter}
%
%
\section{Introduction}
\label{sec:1}
%
%
Skew-symmetric matrices are significantly less used than symmetric
ones.  Many algorithms designed for symmetric matrices have been
transformed in the course of last two decades to work with the
skew-symmetric and other structured matrices, to avoid the algorithms
for the general, nonstructured, matrices.

Mastronardi and Van Dooren in~\cite{Mastronardi-VanDooren-13} showed
that every symmetric and indefinite matrix $A \in \mathbb{R}^{n \times n}$
can be transformed into a block antitriangular form by orthogonal
similarities.  More precisely, if $\inertia(A) = (n_{-}, n_0, n_{+})$,
$n_1 = \min(n_{-}, n_{+})$, $n_2 = \max(n_{-}, n_{+}) - n_1$,
there exists an orthogonal matrix $Q \in \mathbb{R}^{n \times n}$ such
that
\begin{equation}
  M = Q^T A Q = \begin{bmatrix}
    0 & 0 & 0 & 0 \\
    0 & 0 & 0 & Y^T \\
    0 & 0 & X & Z^T \\
    0 & Y & Z & W
  \end{bmatrix},
  \label{1.1}
\end{equation}
where $Y \in \mathbb{R}^{n_1 \times n_1}$ is nonsingular and lower
antitriangular, $W \in \mathbb{R}^{n_1 \times n_1}$ is symmetric,
$X \in \mathbb{R}^{n_2 \times n_2}$ is symmetric and definite, and
$Z \in \mathbb{R}^{n_1 \times n_2}$.

Bujanovi\'{c} and Kressner in~\cite{Bujanovic-Kressner-16} derived a
computationally effective block algorithm that computes the block
antitriangular factorization (\ref{1.1}).  Unfortunately that
algorithm sometimes fails to detect the inertia.  A new algorithm for
the antitriangular factorization was presented
in~\cite{Laudadio-Mastronardi-VanDooren-16}.

Pestana and Wathen in~\cite{Pestana-Wathen-14} simplified the
algorithm for the special saddle point matrices
\begin{displaymath}
  A = \begin{bmatrix}
    H & B^T \\
    B & 0
  \end{bmatrix},
\end{displaymath}
where $H \in \mathbb{R}^{k \times k}$ is symmetric, but not
necessarily positive definite, and $B \in \mathbb{R}^{m \times k}$,
$m \geq k$. 

In this paper we show that skew-symmetric matrices have antitriangular
form, while skew-Hermitian ones have a block antitriangular form
similar to the block antitriangular form of real symmetric matrices.

In the next section of the paper we constructively prove that every
skew-symmetric matrix can be transformed into lower antitriangular
form, and establish the connection between the number of nontrivial
antidiagonals and the rank of the skew-symmetric matrix.  In
Section~\ref{sec:3} a stable numerical procedure for computing the
antitriangular form is derived.  In Section~\ref{sec:4} we show that
the antitriangular form can be reorganized to the multi-arrowhead
form.  Section~\ref{sec:5} contains the results about block
antitriangular form of Hermitian, and, therefore, skew-Hermitian
matrices.
%
%
\section{Factorization of a skew-symmetric matrix into antitriangular form}
\label{sec:2}
%
%
In this section we constructively prove that every skew-symmetric
matrix can be reduced to antitriangular form by orthogonal similarity
transformations.

To this end we use Givens rotations, since Jacobi rotations
$Q_{ij} \assgn Q(i, j, \varphi_{ij})$ cannot annihilate the element at
the position $(i, j)$ in a skew symmetric matrix $A$.  Suppose that
$A_{ij}$ is a skew-symmetric matrix of order $2$, and $Q_{ij}$ is a
rotation.  Then we have
\begin{displaymath}
  Q_{ij}^T A_{ij}^{} Q_{ij}^{} = \begin{bmatrix}
    \hpm\cos \varphi & \sin \varphi \\
    -\sin \varphi    & \cos \varphi
  \end{bmatrix}
  \begin{bmatrix}
    \hpm 0 & a_{ij} \\
    -a_{ij} & 0
  \end{bmatrix}
  \begin{bmatrix}
    \cos \varphi & -\sin \varphi \\
    \sin \varphi & \hpm \cos \varphi
  \end{bmatrix}
  = \begin{bmatrix}
    \hpm 0 & a_{ij} \\
    -a_{ij} & 0
  \end{bmatrix}
  = A_{ij}.
\end{displaymath}
Therefore, we use the Givens rotation $Q_{ij}$ to annihilate the
elements at positions $(i, k)$ and $(k, i)$, $k \neq j$, or at
positions $(k, j)$ and $(j, k)$, $k \neq i$.

\begin{theorem}
  Let $A \in \mathbb{R}^{n \times n}$ be a skew-symmetric matrix.
  Then $A$ can be factored as
  \begin{displaymath}
    A = Q M Q^T,
  \end{displaymath}
  where $Q$ is an orthogonal matrix, and $M$ is an antitriangular matrix.
  \label{thm2.1}
\end{theorem}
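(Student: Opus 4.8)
The plan is to give a constructive reduction by Givens rotations, mimicking the column-by-column annihilation that one uses to tridiagonalize a symmetric matrix, but steering the fill so that the final form is antitriangular rather than tridiagonal. The key observation, already noted in the excerpt, is that a Givens rotation $Q_{ij}$ applied as an orthogonal similarity $Q_{ij}^T A Q_{ij}$ preserves skew-symmetry and can zero out a chosen off-diagonal entry (together with its skew-symmetric partner), provided we rotate in a plane that does \emph{not} touch the pivot position we want to keep. So the whole reduction is a bookkeeping argument: choose the order of rotations so that previously created zeros are not destroyed.

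First I would proceed one antidiagonal at a time, working from the top-left corner toward the antidiagonal. Concretely, to build the lower antitriangular structure I want to annihilate, in the first row, all entries $a_{1,k}$ for $k = 1, \dots, n-2$ (and by skew-symmetry their partners $a_{k,1}$), leaving only the entry in position $(1, n-1)$ as the single surviving entry of that row above the antidiagonal region. To kill $a_{1,k}$ I use a rotation $Q_{k,\,k+1}$ in the plane spanned by coordinates $k$ and $k+1$: this combines columns (and rows) $k$ and $k+1$, and the rotation angle can be chosen to send $a_{1,k}$ to zero while the mass is pushed into $a_{1,k+1}$. Sweeping $k = 1, 2, \dots, n-2$ in increasing order chases the nonzero entry rightward along the first row until everything to the left of position $(1,n-1)$ is zero; because each rotation only mixes adjacent indices $k,k+1$ and acts after the entry $a_{1,k}$ is the one being cleared, no earlier zero in the first row is reintroduced. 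By skew-symmetry the first column is cleared simultaneously.

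Next I would deflate: after the first row and column are reduced to a single nonzero entry at $(1,n-1)$, the remaining active work is on the trailing $(n-2)\times(n-2)$ principal submatrix obtained by deleting row/column $1$ and row/column $n$ (or the appropriate indices determined by where the surviving entry sits), which is again skew-symmetric. I then recurse on that smaller block, at each stage using only rotations supported inside the current active index set, so that rotations at later stages cannot disturb the antidiagonal entries already fixed in earlier stages. The orthogonal matrix $Q$ is the product of all the Givens rotations used, and induction on $n$ closes the argument, with the base cases $n = 1$ (the $1\times 1$ zero matrix is trivially antitriangular) and $n = 2$ (a $2\times 2$ skew-symmetric matrix is already antitriangular) handled directly.

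The main obstacle is precisely the claim that the zeros persist: I must verify that when I rotate in the plane $(k,k+1)$ to annihilate an entry in the current pivot row, the rotation does not refill an entry that was zeroed at an earlier step of the same sweep, and that later sweeps (on smaller deflated blocks) leave the already-fixed antidiagonal entries untouched. This is the standard ``chase'' invariant, but for a skew-symmetric matrix one must track \emph{both} the $(i,k)$ entry and its partner $(k,i)$ simultaneously, and confirm the diagonal stays zero throughout (which it does, since the similarity preserves skew-symmetry and a real skew-symmetric matrix has zero diagonal). I would also make explicit the degenerate case in which the entry to be chased is already zero, so that the corresponding rotation is simply omitted; this is what ultimately governs how many nontrivial antidiagonals appear and ties the structure to the rank, as promised in the surrounding text.
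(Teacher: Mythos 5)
Your proposal is correct and uses essentially the same mechanism as the paper's proof: Givens rotations in adjacent planes $(k,k+1)$, which touch the target entry $(1,k)$ from only one side and therefore can annihilate it together with its partner $(k,1)$ without disturbing earlier zeros; the only real difference is loop order (you clear a whole row and deflate the outermost index pair, while the paper sweeps one antidiagonal at a time by induction on the number of cleared antidiagonals). Do fix the off-by-one: the first-row sweep should run over $k=2,\dots,n-1$ so that the surviving entry lands at the antidiagonal position $(1,n)$ rather than $(1,n-1)$, after which deleting rows/columns $1$ and $n$ gives the clean skew-symmetric $(n-2)\times(n-2)$ block whose own antidiagonal aligns with that of the full matrix.
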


\begin{proof}
  The proof is by induction over the number of already annihilated
  antidiagonals of a skew-symmetric matrix $A$.

  Note that $A$ has a zero on its position $(1, 1)$, and this fact
  serves as the basis of induction.
  
  Suppose that after $k-1$ annihilated antidiagonals $M_{k-1}$ has the
  following form,
  \begin{equation}
    M_{k-1}^{} \assgn Q_{k-1}^T A Q_{k-1}^{}
    = \begin{bmatrix}
      \hphantom{-}M_{11}^{} & M_{12}^{} \\
      -M_{12}^T             & M_{22}^{}
    \end{bmatrix},
    \label{2.1}
  \end{equation}
  where
  \begin{equation}
    M_{11}^{} = \begin{bmatrix}
      0       & \cdots   & \cdots  & 0 \\
      \vdots  &          & \iddots & m_{2,k-1} \\
      \vdots  & \iddots  & \iddots & \vdots \\
      0       & -m_{2,k-1} & \cdots & 0
    \end{bmatrix},
    \label{2.2}
  \end{equation}
  while the matrices $M_{12}$ and $M_{22}$ are generally full.  In the
  matrix $Q_{k-1}$ we keep the product of the applied rotations.  If
  $n = k$, we have completed the job.  Otherwise, in the next step we
  annihilate the $k$th antidiagonal.

  First we annihilate elements at positions $(1, k)$ and $(k, 1)$ by
  a rotation $Q_{k,k+1}$ in the plane $(k, k+1)$ that is equal to the
  identity matrix except at the crossings of the $k$th and the
  $(k+1)$th rows and columns, where
  \begin{equation}
    \widehat{Q}_{k,k+1} = \begin{bmatrix}
      \cos \varphi_{k,k+1} & -\sin \varphi_{k,k+1} \\
      \sin \varphi_{k,k+1} & \hpm \cos \varphi_{k,k+1}
    \end{bmatrix}.
    \label{2.3}
  \end{equation}
  We may assume that the elements at the positions $(1, k)$ and
  $(k, 1)$ are nonzero.  Otherwise, we may skip this transformation.

  Since the element at the position $(1, k)$ is transformed only from
  the right-hand side (and the element at the position $(k, 1)$ only
  from the left-hand side), the new elements at these positions are
  \begin{align*}
    m'_{1k} & = m_{1k} \cos \varphi_{k,k+1} + m_{1,k+1} \sin \varphi_{k,k+1}, \\
    m'_{k1} & = -(m_{1k} \cos \varphi_{k,k+1} + m_{1,k+1} \sin \varphi_{k,k+1}) = -m'_{1k}.
  \end{align*}
  By choosing
  \begin{equation}
    \cot \varphi_{k,k+1} = - \frac{m_{1,k+1}}{m_{1k}},
    \label{2.4}
  \end{equation}
  from the basic identity for the trigonometric functions
  $\sin^2 \varphi_{k,k+1} + \cos^2 \varphi_{k,k+1} = 1$, it is easy to
  derive that the sines and the cosines in (\ref{2.3}) (which
  annihilate $m'_{1,k}$) are
  \begin{displaymath}
    \sin \varphi_{k,k+1} = \pm \frac{1}{\sqrt{1 + \cot^2 \varphi_{k,k+1}}}, \qquad
    \cos \varphi_{k,k+1} = \sin \varphi_{k,k+1} \cot \varphi_{k,k+1},
  \end{displaymath}
  where $\cot \varphi_{k,k+1}$ is defined by (\ref{2.4}).

  The next step is to annihilate the elements at the positions
  $(2, k-1)$ and $(k-1, 2)$ by a rotation in the plane $(k-1, k)$.
  This transformation will not destroy the zero pattern, since the
  rows/columns $k-1$ and $k$ already have zeroes as the first elements
  in the corresponding row/column.
  
  In a similar way all the elements of the $k$th antidiagonal will
  be annihilated without destroying the already introduced zeroes.
  
  After the annihilation in this step we obtain $M_k$, which has the
  same form as $M_{k-1}$ from (\ref{2.1}), but the matrix
  $M_{11}^{}$, still antitriangular, has one row and one column more
  than the matrix $M_{11}^{}$ from (\ref{2.2}).  This was the step of
  the induction.

  We proceed with the annihilation of one antidiagonal after another
  until $k$ becomes $n$.
\end{proof}

As one can expect, since the skew-symmetric matrices have the
eigenvalues in pairs of the form $\pm \lambda i$, one `positive' and
one `negative' on the imaginary axis, there is no submatrix $X$ in the
symmetric block antitriangular form (\ref{1.1}), whose dimension
corresponds to the difference between the number of positive and
negative eigenvalues of the symmetric matrix.

If a skew-symmetric matrix $A$ of order $n = 2p$ is given by its
antitriangular factor, then the determinant of $A$ is
\begin{align*}
  \det(A) & = \det(Q M Q^T) = \det(M) \\
  & = (-1)^{2p+1} \cdot (-1)^{2p} \cdots (-1)^3 \cdot (-1)^2 \cdot
  (-1)^p m_{1,2p}^2 m_{2,2p-1}^2 \cdots m_{p,p+1}^2 \\
  & = (-1)^{2(p^2 +2p)} m_{1,2p}^2 m_{2,2p-1}^2 \cdots m_{p,p+1}^2
  = m_{1,2p}^2 m_{2,2p-1}^2 \cdots m_{p,p+1}^2.
\end{align*}
Therefore, $A$ (of even order) is singular if and only if at least one
of the antidiagonal entries is zero.  If $A$ is of odd order, one of
the zeroes of the main diagonal is on the antidiagonal, which proves
the well-known fact that any skew-symmetric matrix of odd order is
always singular.  Now suppose that $A$ is of even order and singular,
and the antidiagonal entry at the position
$(\ell, n - \ell + 1)$, $\ell \leq n - \ell + 1$ is zero.  Obviously,
due to skew-symmetry, the element at the position
$(n - \ell + 1, \ell)$ is also zero.  If there is more than one pair
of zeroes on the antidiagonal, we start from a zero with the smallest
difference of its column and row indices.

Now we apply a procedure similar to the procedure of annihilation of
the elements of the antidiagonal from the previous theorem, but
starting with the annihilation of the element at the position
$(\ell + 1, n - \ell)$ by a rotation in the plane
$(n - \ell, n - \ell + 1)$.  This rotation will also annihilate the
element at the position $(n - \ell, \ell + 1)$.  We proceed with this
annihilation process until all the elements on the antidiagonal
between $(\ell, n - \ell + 1)$ and $(n - \ell + 1, \ell)$ are zeroes.

If $A$ is of even order, after the previous sequence of
transformations, our matrix has a middle part of the antidiagonal
equal to zero.  After such a preparation, a procedure for the
annihilation of the nonzero elements on the antidiagonal is similar
for odd and even orders.  If $A$ is of odd order, the elements at the
positions $(\lceil n / 2 \rceil + 1, \lceil n / 2 \rceil - 1)$ and
$(\lceil n / 2 \rceil - 1, \lceil n / 2 \rceil + 1)$ are the first to
be annihilated, by a rotation in the plane
$(\lceil n / 2 \rceil - 1, \lceil n / 2 \rceil)$.  If $A$ is of even
order, we proceed with the annihilation of the elements at the
positions $(n - \ell + 2, \ell - 1)$ and $(\ell - 1, n - \ell + 2)$ by
a rotation in the plane $(\ell - 1, \ell)$.  The process is finished
when the elements at the positions $(1, n)$ and $(n, 1)$ are
annihilated by a rotation in the plane $(1, 2)$.

If all the elements on the first nontrivial antidiagonal of the final
matrix are nonzero, the matrix has rank $n-1$.  Otherwise, we continue
the process until all elements of some antidiagonal are nonzero.  The
count of such elements is the rank of the matrix.

The process of detecting the rank is illustrated in
Figures~\ref{fig:1} and~\ref{fig:2}.  The first of them is for a
matrix of even order, and the second for a matrix of odd order.

\begin{figure}[hbt]
  \centering{%
  \includegraphics{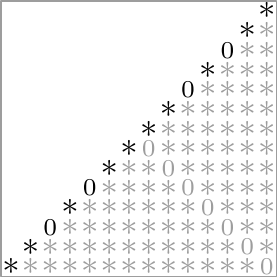} \quad
  \includegraphics{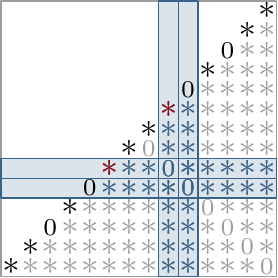} \quad
  \includegraphics{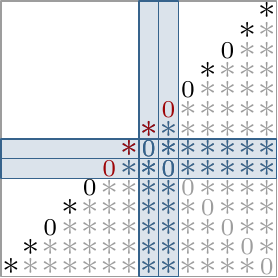} \quad
  \includegraphics{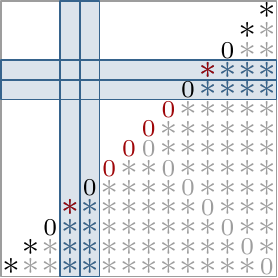} \\[6pt]
  \includegraphics{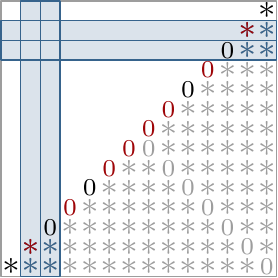} \quad
  \includegraphics{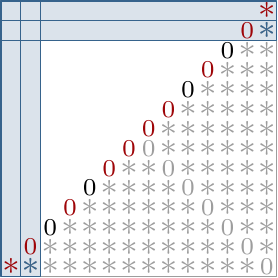} \quad
  \includegraphics{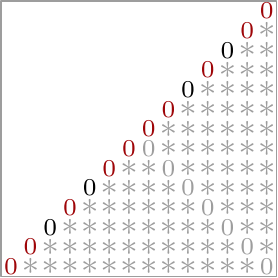}}
  \caption{From left to right, top to bottom -- annihilation of the
    antidiagonal of a matrix of \textbf{even} order: the first
    subfigure is the state before annihilation, while the last is
    after the completion of the process for the first antidiagonal.
    The horizontal and the vertical stripes show the application of
    the Givens rotations from the left and from the right,
    respectively.}
  \label{fig:1}
\end{figure}

\begin{figure}[hbt]
  \centering{%
  \includegraphics{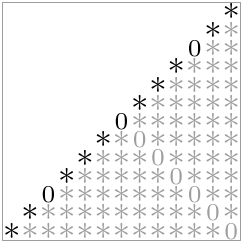} \quad
  \includegraphics{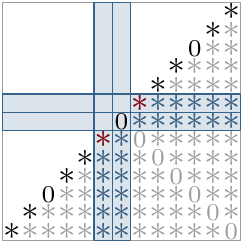} \quad
  \includegraphics{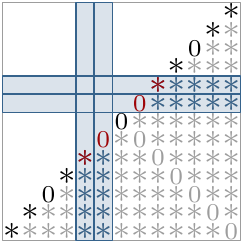} \quad
  \includegraphics{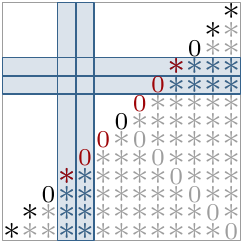} \\[6pt]
  \includegraphics{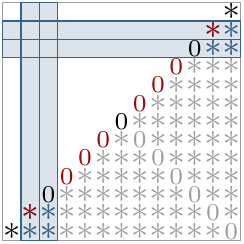} \quad
  \includegraphics{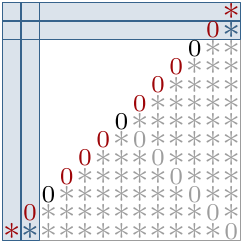} \quad
  \includegraphics{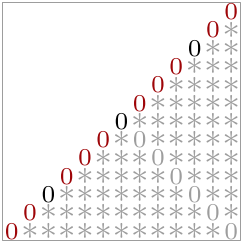}}
  \caption{From left to right, top to bottom -- annihilation of the
    antidiagonal of a matrix of \textbf{odd} order: the first
    subfigure is the state before annihilation, while the last is
    after the completion of the process for the first antidiagonal.
    The horizontal and the vertical stripes show the application of
    the Givens rotations from the left and from the right,
    respectively.}
  \label{fig:2}
\end{figure}
%
%
\section{Numerical computation of the antitriangular form of a skew-symmetric matrix}
\label{sec:3}
%
%
When the QR factorization is used for the numerical rank detection, it
is always computed with column pivoting.  Here we derive a similar
algorithm for the antitriangular factorization.  For a purely
practical reason we reduce an antitriangular matrix to the upper
antitriangular form, which can easily be `flipped' over the main
antidiagonal to the lower antitriangular form.

In addition to the procedure described in the previous section, here
we derive a reduction to the antitriangular form by applying the
ordinary Householder reflectors.

Before the annihilation process in each step, a pivot column is
chosen.  The pivot column has maximal norm in the unreduced part of
the matrix.  In the first step, the unreduced part is the whole
matrix.  Then the whole pivot column (not only its unreduced part) is
swapped with the last column in the whole matrix by a permutation
$P_1^T$, applied from the right, while $P_1$ is applied from the left
to swap the corresponding rows of the matrix.

An orthogonal matrix $H_1$, that consists of a Householder reflector
$\widetilde{H}_1$ of order $n-1$ complemented with the identity
matrix of order $1$,
\begin{displaymath}
  H_1 = \diag(\widetilde{H}_1, 1),
\end{displaymath}
is then applied to the first $n-1$ rows of $A$ such that the last
column is reduced to a single element at the position $(1, n)$.  Note
that this element is the largest by absolute value in the matrix
$H_1^{} P_1^{} A P_1^T$.  After the completion of the left-hand-side
transformation, the right-hand side transformation with the same $H_1$
(since $H_1^T = H_1^{}$) is applied from to the first $n-1$ columns of
$H_1^{} P_1^{} A P_1^T$.

In the second step we proceed by reducing the last-but-one row and
column of $\widehat{H}_1^{} A \widehat{H}_1^T$, where
$\widehat{H}_1 = H_1 P_1$, while the first and the last rows and
columns of the whole matrix remain intact.  After the appropriate
pivoting by a permutation $P_2$, an orthogonal matrix
$H_2$,
\begin{displaymath}
  H_2 = \diag(1, \widetilde{H}_2, 1),
\end{displaymath}
where $\widetilde{H}_2$ is a Householder reflector, is chosen such
that the submatrix
$(\widehat{H}_2^{} \widehat{H}_1^{} A \widehat{H}_1^T \widehat{H}_2^T)(2:n-1,2:n-1)$,
with $\widehat{H}_2 = H_2 P_2$, has its last column (and row) equal to
$c e_1$ ($-c e_1^T$), where $|c|$ is the norm of the unreduced part of
the pivot (now, the penultimate) column.

The process is repeated in the same way until the unreduced part of
the pivot column is of length $1$, as shown in Algorithm~\ref{alg:1},
while the first steps of the reduction process are illustrated in
Figure~\ref{fig:3}.

\begin{algorithm}[hbt]
\SetKwInput{KwIn}{Input}
\SetKwInput{KwOut}{Output}
\KwIn{$A$, a skew-symmetric matrix of order $n$, and $\tol$, a numerical tolerance.}
\KwOut{$A$ reduced to the upper antitriangular form.}
\BlankLine
\Begin{
    \For{$\step = 1$ \KwTo $n/2$}
        {$i1 = \step$; \quad $i2 = n - \step + 1$\;
          \For{$k = i1$ \KwTo $i2$}
              {compute $n_k = \| A(i1:i2,k) \|_2$
              }
          compute $\imax$ -- the index of the column with the largest norm, $n_{\imax} = \max_{k = i1, \ldots, i2} n_k$\;
          \If{$\imax \neq i2$}
             {swap $A(:,\imax)$ and $A(:,i2)$\;
              swap $A(\imax,:)$ and $A(i2,:)$\;
             }
          \If{$n_k > \tol$}
             {compute the Householder reflector $\widetilde{H}_{\step}^{}$ from $A(i1:i2,i2)$\;
              apply $\widetilde{H}_{\step}^{}$ from the left to the columns $1$ to $i2$\;
              set $A(i1+1:i2,i2) = 0$\;
              apply $\widetilde{H}_{\step}^{}$ from the right to the rows $1$ to $i2$\;
              set $A(i2,i1+1:i2-1) = 0$\;
              skew-symmetrize matrix $A(i1:i2,i1:i2) = (A(i1:i2,i1:i2) - A^T(i1:i2,i1:i2))/2$
             }
        }
  }
\caption{Reduction to the upper antitriangular form.}
\label{alg:1}
\end{algorithm}

\begin{figure}[hbt]
  \centering{%
  \includegraphics{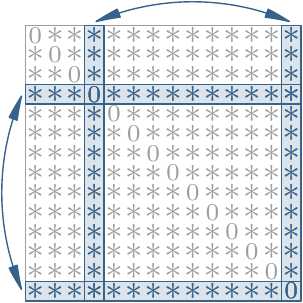} \quad
  \includegraphics{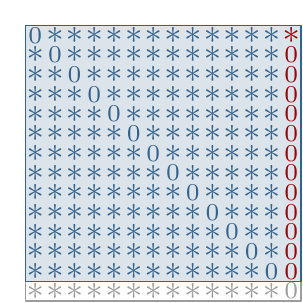} \quad
  \includegraphics{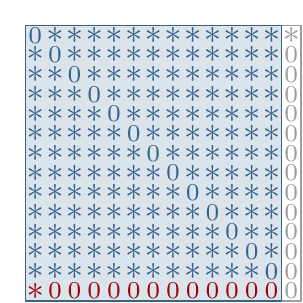} \\[6pt]
  \includegraphics{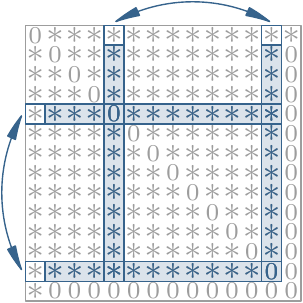} \quad
  \includegraphics{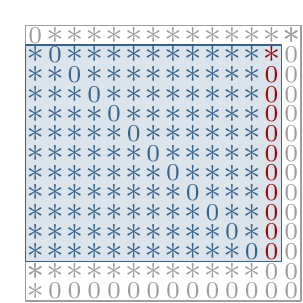} \quad
  \includegraphics{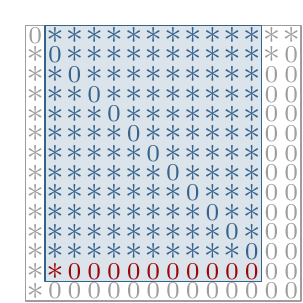} \\[6pt]
  \includegraphics{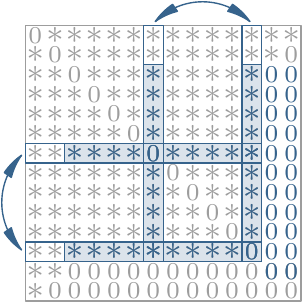} \quad
  \includegraphics{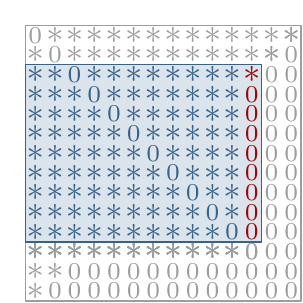} \quad
  \includegraphics{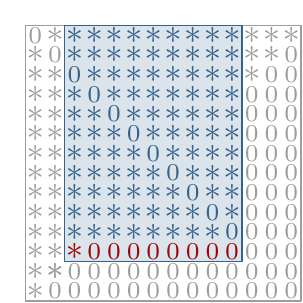}}
  \caption{The first three steps of the reduction process.  The first
    subfigure in each row shows the symmetric pivoting.  The second
    shows the application of the left-hand-side, while the third shows
    the right-hand-side orthogonal transformation.  The shaded regions in
    the second and the third subfigure show the part of the matrix
    affected by the Householder reflector $\widetilde{H}_{\step}$.}
  \label{fig:3}
\end{figure}

Note that Algorithm~\ref{alg:1} could be written to work only on one
triangle of the matrix, as is customary in LAPACK.  In that case the
skew-symmetrization in the last step of the algorithm would not be
needed.  Also, the generating vector $v_j$ of the Householder
reflector $\widetilde{H}_j^{} = I - v_j^{} v_j^T$ could be stored
after the $j$th step below the main diagonal in the upper
antitriangular case, i.e., the $k$th element of $v_j$ in the place
$A(j+k,j)$.

Algorithm~\ref{alg:1} can be stopped earlier if the pivot column norm
in the unreduced part of the matrix is (numerically) zero.  Since this
is the largest column norm in the unreduced part, the whole submatrix
is then zero.  Therefore, the right-hand-side transformation will not
spoil the zeroes in this submatrix.

Suppose that the reduction process illustrated in Figure~\ref{fig:3}
is completed, i.e., the norms of the unreduced part of the columns are
zeroes.  This situation is displayed in Figure~\ref{fig:4}.

\begin{figure}[hbt]
  \centering{%
  \includegraphics{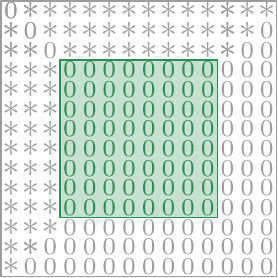}}
  \caption{A possible termination of the antitriangularization
    process.}
  \label{fig:4}
\end{figure}

Of course, the situation in the real process is not as ideal as in the
Figure~\ref{fig:4} since the small rounding errors shift the zeroes in
the shaded region to the elements with the small absolute values, such
that the column norm of each shaded column is less than or equal to
some $\tol$.  The first question is how to choose $\tol$.  An
experience from similar factorizations shows that $\tol$ should
include the machine epsilon, $\varepsilon$, the number of
transformations applied to each element (of order $n$), and the
largest element in the process.  Since the norms in the first step are
chosen such that the norm of the last column is the largest, this is a
good candidate for the largest element in the process.  Therefore,
$\tol$ is set as
\begin{displaymath}
  \tol = n \varepsilon \max_{k=1,\ldots,n} \| A(:,k) \|_2.
\end{displaymath}
Note that the determination of $\tol$ is directly related to the
determination of the rank of the matrix.  After the selection of
$\tol$, the elements in the shaded region in Figure~\ref{fig:4} should
be set to zeroes.

As we have already seen in the previous section, this antitriangular
form can be further reduced to an antitriangular form with nonzero
antidiagonal elements.  Once again, this process can be done using
Householder reflectors.

First, denote by $\ell$, $\ell \leq n/2$, the last column with a
nonzero antidiagonal element.  Then apply an orthogonal transformation
\begin{displaymath}
  H'_\ell = \diag(I_\ell, \widetilde{H}'_\ell, I_{\ell-1}),
\end{displaymath}
where $\widetilde{H}'_\ell$ is the Householder reflector of order
$m \assgn n-2\ell+1$, from the left to reduce the part of the $\ell$th
column (from the $(\ell+1)$th row) to a single element at the position
$(\ell+1,\ell)$.  Due to skew-symmetry, the application of the
right-hand-side transformation is not needed.  The elements of the
first $\ell$ columns that have been transformed could be transposed,
with a change of sign, and written to the elements of the first $\ell$
rows.

The next transformation $H'_{\ell-1}$,
\begin{displaymath}
  H'_{\ell-1} = \diag(I_{\ell+1}, \widetilde{H}'_{\ell-1}, I_{\ell-2}),
\end{displaymath}
has its Householder reflector $\widetilde{H}'_{\ell-1}$ of the same
order $m$ as $\widetilde{H}'_\ell$, but ``shifted down'' one place.
This transformation is applied to the part of $(\ell-1)$th column
(from row $\ell+2$) to reduce it to a single element at the position
$(\ell+2, \ell-1)$.

This sequence of transformations ends after transforming the first
column (row).  Algorithm~\ref{alg:2} describes this reduction. An
illustration of the reduction process for the matrix from
Figure~\ref{fig:4} is given in Figure~\ref{fig:5}.

\begin{algorithm}
\SetKwInput{KwIn}{Input}
\SetKwInput{KwOut}{Output}
\SetKw{KwBreak}{break}
\KwIn{$A$, a skew-symmetric matrix of order $n$.}
\KwOut{$A$ reduced to the upper antitriangular form with a nontrivial antidiagonal.}
\BlankLine
\Begin{
    $\tol = n \cdot \varepsilon \cdot \max_{k=1,\ldots,n} \|A(:,k)\|_2$\;
    \For{$\step = 1$ \KwTo $n/2$}
        {$i1 = \step$; \quad $i2 = n - \step + 1$\;
          \For{$k = i1$ \KwTo $i2$}
              {compute $n_k = \| A(i1:i2,k) \|_2$
              }
          compute $\imax$ -- the index of the column with the largest norm, $n_{\imax} = \max_{k = i1, \ldots, i2} n_k$\;
          \If{$\imax \neq i2$}
             {swap $A(:,\imax)$ and $A(:,i2)$\;
              swap $A(\imax,:)$ and $A(i2,:)$\;
             }
          \eIf{$n_k > \tol$}
             {compute the Householder reflector $\widetilde{H}_{\step}^{}$ from $A(i1:i2,i2)$\;
              apply $\widetilde{H}_{\step}^{}$ from the left to the columns $1$ to $i2$\;
              set $A(i1+1:i2,i2) = 0$\;
              apply $\widetilde{H}_{\step}^{}$ from the right to the rows $1$ to $i2$\;
              set $A(i2,i1+1:i2-1) = 0$\;
              skew-symmetrize matrix $A(i1:i2,i1:i2) = (A(i1:i2,i1:i2) - A^T(i1:i2,i1:i2))/2$
             }
             {$A(i1:i2,i1:i2)=0$\;
              \KwBreak\;
             }
        }
    $p1 = \step - 1$; \quad $p2 = n - \step$\;
    \For{$\ell = \step - 1$ \KwTo $1$}
        {$p1 = p1 + 1$; \quad $p2 = p2 + 1$\;
         compute the Householder reflector $\widetilde{H}'_{\ell}$ from $A(p1:p2,\ell)$\;
         apply $\widetilde{H}'_{\ell}$ from the left to the columns $1$ to $\ell$\;
         set $A(p1+1:p2,\ell) = 0$\;
         $A(1:\ell,p1:p2) = -A^T(p1:p2,1:\ell)$\;
        }
  }
\caption{Reduction to the upper antitriangular form with a nonzero diagonal.}
\label{alg:2}
\end{algorithm}

\begin{figure}[hbt]
  \centering{%
  \includegraphics{fig24.pdf} \quad
  \includegraphics{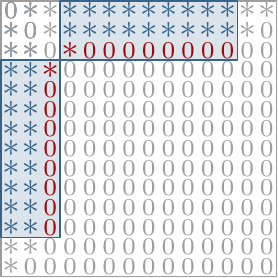} \quad
  \includegraphics{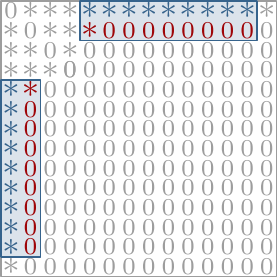} \quad
  \includegraphics{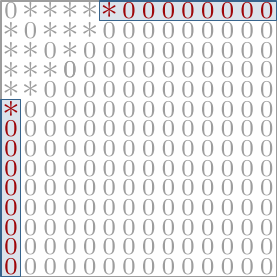} \quad
  \includegraphics{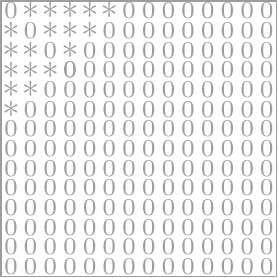}}
  \caption{The first subfigure shows the initial state of a matrix
    before the first iteration of the $\ell$-loop of
    Algorithm~\ref{alg:2}.  The next three subfigures show the effects
    of the transformations $H_3'$, $H_2'$, and $H_1'$, in that order.
    The last subfigure shows the fully reduced matrix.}
  \label{fig:5}
\end{figure}

Algorithm~\ref{alg:2} was tested for the matrices with various ranks
in Fortran's \texttt{double precision}.  The test matrices were
constructed by a procedure similar to the LAPACK's \texttt{dlarge} by
setting their eigenvalues in the Murnaghan form,
\begin{displaymath}
  D = \diag\begin{pmatrix}
  \begin{bmatrix}
    0 & \lambda_1 \\
    -\lambda_1 & 0
  \end{bmatrix},
  \begin{bmatrix}
    0 & \lambda_2 \\
    -\lambda_2 & 0
  \end{bmatrix},
  \ldots,
  \begin{bmatrix}
    0 & \lambda_r \\
    -\lambda_r & 0
  \end{bmatrix},
  0, \ldots, 0
  \end{pmatrix}.
\end{displaymath}
To avoid the unnecessary errors, the random orthogonal matrices
$Q_1, \ldots, Q_n$ were generated in quadruple precision and then
applied to $D$,
\begin{displaymath}
  A' = Q_n^{} \cdots Q_1^{} D Q_1^T \cdots Q_n^T.
\end{displaymath}
Final $A$ was obtained from $A'$ by rounding the quadruple precision
result to \texttt{double precision}.

Our test collection consists of matrices $A_k$ of order $108$ with
ranks of $2, 4, 6, \ldots, 108$.  $A_k$ of rank $2r$ has nonzero
eigenvalues $\{ \pm i, \pm 2^{-1} i, \ldots, \pm 2^{r-1} i \}$.  Note
that for the matrices of higher ranks, eigenvalues are gradually
tending to zero, and therefore are very hard to detect.  The following
results were obtained.
\begin{center}
  \begin{tabular}[hbt]{@{}cccc@{}}
    \toprule
    Matrix rank   & 2--96 & 98 & 100--108\\
    \midrule
    Detected rank & correct & 96 & 98\\
    \bottomrule
  \end{tabular}
\end{center}

The test collection and a Fortran implementation of the
Algorithms~\ref{alg:1} and \ref{alg:2} is freely available at
\texttt{https://github.com/venovako/ATFact} repository.
%
%
\section{Multi-arrowhead form of a skew-symmetric matrix}
\label{sec:4}
%
%
In Section~\ref{sec:2} we transform a full skew-symmetric matrix to
antitriangular form.  From the antitriangular form of a skew-symmetric
matrix it is easy to obtain a new form -- the multi-arrowhead form of
a matrix.
\begin{theorem}
  Let $M \in \mathbb{R}^{n \times n}$ be a skew-symmetric matrix in
  the antitriangular form.  By the two-sided permutations $P$,
  \begin{equation}
    P = \begin{cases}
      [e_k, e_{k-1}, e_{k+1}, e_{k-2}, e_{k+2} , \ldots, e_1, e_n], &
      \text{if $n = 2k - 1$,} \\
      [e_k, e_{k+1}, e_{k-1}, e_{k+2}, e_{k-2} , \ldots, e_1, e_n], &
      \text{if $n = 2k$,}
    \end{cases}
    \label{4.1}
  \end{equation}
  the matrix $M$ can be transformed into
  \begin{displaymath}
    M = P S P^T,
  \end{displaymath}
  where $S$ has the following multi-arrowhead form.  If $n$ is odd, then
  \begin{displaymath}
    S = \begin{bmatrix}
      0     & 0      & s_{13}  & 0      & s_{15}  & 0 & \cdots & 0 & s_{1n} \\
      0     & 0      & s_{23}  & 0      & s_{25}  & 0 & \cdots & 0 & s_{2n} \\
     -s_{13} & -s_{23} & 0      & 0      & s_{35}  & 0 & \cdots & 0 & s_{3n} \\
      0     & 0      & 0      & 0      & s_{45}  & 0 & \cdots & 0 & s_{4n} \\
     -s_{15} & -s_{25} & -s_{35} & -s_{45} & 0      & 0 & \cdots & 0 & s_{5n} \\
      0     & 0      & 0      & 0      & 0      & 0 & \cdots & 0 & s_{6n} \\
     \vdots & \vdots & \vdots & \vdots & \vdots & \vdots & \ddots & \vdots & \vdots \\
      0     & 0      & 0      & 0      & 0      & 0 & \cdots & 0 & s_{n-1,n} \\
     -s_{1n} & -s_{2n} & -s_{3n} & -s_{4n} & -s_{5n} & -s_{6n} & \cdots & -s_{n-1,n} & 0
    \end{bmatrix},
  \end{displaymath}
  and if $n$ is even, then
  \begin{displaymath}
    S = \begin{bmatrix}
      0     & s_{12}  & 0      & s_{14}  & 0 & \cdots & 0 & s_{1n} \\
     -s_{12} & 0      & 0      & s_{24}  & 0 & \cdots & 0 & s_{2n} \\
      0     & 0      & 0      & s_{34}  & 0 & \cdots & 0 & s_{3n} \\
     -s_{14} & -s_{24} & -s_{34} & 0      & 0 & \cdots & 0 & s_{4n} \\
      0     & 0      & 0      & 0      & 0 & \cdots & 0 & s_{5n} \\
     \vdots & \vdots & \vdots & \vdots & \vdots & \ddots & \vdots & \vdots \\
      0     & 0      & 0      & 0      & 0 & \cdots & 0 & s_{n-1,n} \\
     -s_{1n} & -s_{2n} & -s_{3n} & -s_{4n} & -s_{5n} & \cdots & -s_{n-1,n} & 0
    \end{bmatrix}.
  \end{displaymath}
  Moreover, if $n$ is odd, the first row and the first column can
  become a zero row and a zero column with an additional sequence of
  rotations at the positions $(1,2)$, $(1,4)$, \ldots, $(1, n-1)$.
  \label{thm4.1}
\end{theorem}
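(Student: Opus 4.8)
The plan is to prove the statement by a direct index-chasing verification, exploiting the fact that a lower antitriangular skew-symmetric matrix is characterized by a single inequality on the index sum. Writing $M = (m_{ij})$, antitriangularity together with skew-symmetry means that $m_{ij} = 0$ whenever $i + j \le n$, that the entries with $i + j \ge n+1$ (on and below the main antidiagonal) are otherwise free apart from $m_{ij} = -m_{ji}$, and that the central antidiagonal entry $m_{kk}$ vanishes when $n = 2k-1$ is odd, since it sits on the zero diagonal.

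First I would record that the two permutations in (\ref{4.1}) are exactly the ``spiral from the centre'' orderings. Writing $P = [e_{\pi(1)}, \ldots, e_{\pi(n)}]$, one has, for $n = 2k-1$, $\pi(1) = k$, $\pi(2j) = k - j$, $\pi(2j+1) = k+j$; and for $n = 2k$, $\pi(2j-1) = k - j + 1$, $\pi(2j) = k + j$. In both cases $\pi$ is a bijection of $\{1, \ldots, n\}$ with $\pi(n) = n$. Since $S = P^T M P$, its entries are simply $s_{ab} = m_{\pi(a),\pi(b)}$; thus $S$ is skew-symmetric (being a permutation similarity of a skew-symmetric matrix), and $s_{ab}$ can be nonzero only if $\pi(a) + \pi(b) \ge n + 1$.

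The core of the argument is to translate this inequality into the multi-arrowhead pattern, working one column at a time. Fixing a column index $b$ with value $\pi(b)$, the condition $\pi(a) + \pi(b) \ge n+1$ selects a set of admissible rows $a$, and using the explicit formula for $\pi$ one checks that in the even case the admissible superdiagonal rows fill the columns $b = 2, 4, \ldots, n$ completely (rows $1, \ldots, b-1$) while the odd columns carry no superdiagonal entries, whereas in the odd case the parities are interchanged, the full columns being $b = 3, 5, \ldots, n$. This is a short but fiddly case analysis --- splitting $a$ into its even and odd subsequences and comparing $k \pm (\cdot)$ against $\pi(b)$ --- and it is where the two displayed matrices (and the vanishing central entry for odd $n$) must be matched term by term. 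I expect keeping the two parities straight to be the main obstacle.

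Finally, for the additional claim when $n$ is odd, I would observe that the first row of $S$ is supported exactly on the columns $3, 5, \ldots, n$. I would annihilate $s_{1,3}, s_{1,5}, \ldots, s_{1,n}$ in this order by rotations in the planes $(1,2), (1,4), \ldots, (1,n-1)$: the rotation in plane $(1, 2j)$ combines rows $1$ and $2j$ and uses the (generically nonzero) arrowhead entry $s_{2j,\,2j+1}$ as pivot to clear $s_{1,\,2j+1}$. The point to verify is non-interference: row $2j$ vanishes in every column $c \in \{3, 5, \ldots, 2j-1\}$, since such $c$ satisfy $c < 2j$ and the even column $2j$ has no superdiagonal support, whence $s_{2j,c} = -s_{c,2j} = 0$ by skew-symmetry. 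Consequently each rotation leaves the previously created zeros of row $1$ intact, and zeroing the first row simultaneously zeroes the first column.
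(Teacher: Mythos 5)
Your proposal is correct and follows the same route as the paper: conjugate by the centre-outward permutation $P$ of (\ref{4.1}) and, for odd $n$, clear the first row with rotations in the planes $(1,2),(1,4),\ldots,(1,n-1)$ pivoting on the antidiagonal entries $s_{2j,2j+1}$. The only difference is one of detail: the paper simply asserts that $P^TMP$ has the multi-arrowhead pattern, whereas you actually verify it via the criterion $\pi(a)+\pi(b)\ge n+1$ and also supply the non-interference argument ($s_{2j,c}=0$ for odd $c<2j$) that the paper leaves implicit.
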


\begin{proof}
  The required result is obtained by a symmetric permutation
  $P^T M P$, where $P$ is given by~(\ref{4.1}).

  The remaining part of the proof for the skew-symmetric matrices of
  odd order is straightforward.  By a rotation at the position
  $(1, 2)$ we annihilate the elements at the positions $(1, 3)$ and
  $(3, 1)$.  Then we use a rotation at the position $(1, 4)$ and
  annihilate the elements at the positions $(1, 5)$ and $(5, 1)$, and
  so on until the rotation at the position $(1, n-1)$ which
  annihilates the elements at the positions $(1, n)$ and $(n, 1)$.
\end{proof}
%
%
\section{Factorization of a skew-Hermitian matrix into the block antitriangular form}
\label{sec:5}
%
%
Skew-Hermitian matrices are the complex generalizations of the
skew-symmetric matrices, with purely imaginary eigenvalues, but now
they need not be in complex-conjugate pairs.  Therefore, we can have a
surplus of `positive' or `negative' signs on the imaginary axis.

For example, if $Q$ is any unitary matrix, then a matrix $A = ai I$,
where $a \in \mathbb{R}$ and $a \neq 0$, cannot be transformed into
antitriangular form since $Q^{\ast} A Q = ai I$.

On the other hand, $H \assgn iA$ is a Hermitian matrix if $A$ is
skew-Hermitian.  Therefore, if $H$ can be transformed into block
antitriangular form, a relation between skew-Hermitian and Hermitian
matrices is used to obtain the the block antitriangular form of $A$.

If we look at the proof of Theorem~2.1
from~\cite{Mastronardi-VanDooren-13}, that theorem is also valid for
the Hermitian matrices if in the statement of the Theorem orthogonal
matrices are replaced by unitary matrices and the transpose operation
is replaced by the conjugate transpose.  That proof relies on the
properties of the nonnegative, nonpositive, neutral and null-spaces.
In~\cite{Gohberg-Lancaster-Rodman-05}, all the required properties are
derived, not only for the complex Euclidean scalar products, but for
the indefinite complex scalar products.  Therefore, it is easy to
prove the following theorem.
\begin{theorem}
  Let $A \in \mathbb{C}^{n \times n}$ be a Hermitian indefinite matrix
  with $\inertia(A) = (n_{-}, n_0, n_{+})$,
  $n_1 = \min(n_{-}, n_{+})$, $n_2 = \max(n_{-}, n_{+}) - n_1$.
  Then, there exists a unitary matrix $Q \in \mathbb{C}^{n \times n}$
  such that
  \begin{displaymath}
    M = Q^{\ast} A Q = \begin{bmatrix}
      0 & 0 & 0 & 0 \\
      0 & 0 & 0 & Y^{\ast} \\
      0 & 0 & X & Z^{\ast} \\
      0 & Y & Z & W
    \end{bmatrix},
  \end{displaymath}
  where $Y \in \mathbb{C}^{n_1 \times n_1}$ is nonsingular and lower
  antitriangular, $W \in \mathbb{C}^{n_1 \times n_1}$ is Hermitian,
  $X \in \mathbb{C}^{n_2 \times n_2}$ is Hermitian and definite, and
  $Z \in \mathbb{C}^{n_1 \times n_2}$.
  \label{thm5.1}
\end{theorem}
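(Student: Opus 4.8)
The plan is to transcribe the proof of Theorem~2.1 in~\cite{Mastronardi-VanDooren-13} into the Hermitian setting, replacing every transpose by a conjugate transpose and every orthogonal matrix by a unitary one. That proof is not computational but structural: it builds $Q$ by adapting an (ordinary, Euclidean) orthonormal basis of the space to a nested family of subspaces determined by the indefinite scalar product $\langle x, y\rangle = y^{\ast} A x$. Since $A$ is Hermitian, this form is a Hermitian sesquilinear form, and the entire subspace calculus the argument relies on --- the maximal dimensions of nonnegative, nonpositive, neutral, and null subspaces --- is precisely what is catalogued in~\cite{Gohberg-Lancaster-Rodman-05} for indefinite complex scalar products. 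Thus the algebraic backbone of the real argument survives the passage from $\mathbb{R}$ to $\mathbb{C}$ essentially unchanged, which is what makes the theorem ``easy'' once the cited machinery is granted.

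Concretely, I would proceed in the following order. First, read off from $\inertia(A) = (n_{-}, n_0, n_{+})$ the three relevant dimensions: the null space $\ker A$ has dimension $n_0$; by the complex analogue of the neutral-subspace theory in~\cite{Gohberg-Lancaster-Rodman-05}, a maximal $A$-neutral subspace $\mathcal{N}$ (on which $x^{\ast} A x = 0$) has dimension $n_0 + n_1$ with $n_1 = \min(n_{-}, n_{+})$, and it may be chosen to contain $\ker A$; and there is a complementary subspace of dimension $n_2 = \max(n_{-}, n_{+}) - n_1$ on which $A$ is definite. Second, choose a unitary basis of the space subordinate to the flag $\ker A \subset \mathcal{N} \subset \mathbb{C}^n$, together with the definite complement and one further block completing the basis, and collect these vectors as the columns of a unitary $Q$, ordered to match the four block rows of $M$. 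Third, form $M = Q^{\ast} A Q$ and identify the blocks: the kernel directions kill the entire first block row and column; neutrality of $\mathcal{N}$ forces the top-left $(n_0 + n_1)$-dimensional corner to vanish (the zero blocks in positions $(1,1),(1,2),(2,1),(2,2)$), so the non-kernel part of $\mathcal{N}$ couples only to the last block, producing $Y$; and single-signedness on the complement yields the definite Hermitian block $X$. Fourth, within the $n_1$-dimensional block I would apply an additional unitary reduction --- an antitriangularization of $Y$ by Householder reflectors, exactly as in Section~\ref{sec:3} --- to bring $Y$ to lower antitriangular form. Hermiticity of $X$ and of $W$ then comes for free from $M = M^{\ast}$.

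The main obstacle I anticipate is not any single computation but verifying that the two structural guarantees transfer cleanly: that $Y$ is \emph{nonsingular} and that $X$ is \emph{definite} rather than merely invertible. Nonsingularity of $Y$ is equivalent to the statement that no nonzero vector of the non-kernel part of $\mathcal{N}$ is $A$-orthogonal to the whole last block; were such a vector to exist, the zero pattern would place it in $\ker A$, contradicting its choice as a complement to the kernel inside $\mathcal{N}$. This is exactly the maximality/non-degeneracy step where the pairing between the nonnegative and nonpositive cones must be used, and here the Hermitian-form results of~\cite{Gohberg-Lancaster-Rodman-05} do the work that the real bilinear form did in~\cite{Mastronardi-VanDooren-13}. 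Definiteness of $X$ follows because, after the null and neutral directions are removed, the residual form has inertia $(0, 0, n_2)$ or $(n_2, 0, 0)$ according to the sign of $n_{+} - n_{-}$, i.e.\ a single sign. Once these two facts are secured, the remaining blocks $Z$, $W$ are unconstrained apart from the Hermiticity inherited from $M$, and the block antitriangular form is complete.
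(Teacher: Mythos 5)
Your proposal follows exactly the route the paper itself takes: the paper offers no formal proof of Theorem~\ref{thm5.1}, only the remark that the proof of Theorem~2.1 in~\cite{Mastronardi-VanDooren-13} carries over verbatim once transposes become conjugate transposes and orthogonal matrices become unitary, with the needed subspace theory (nonnegative, nonpositive, neutral, and null subspaces of an indefinite complex scalar product) supplied by~\cite{Gohberg-Lancaster-Rodman-05}. Your write-up is a faithful and somewhat more detailed expansion of that same argument, so it is consistent with the paper's intent.
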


In the next Corollary we abuse the notation for the inertia of the
skew-Hermitian matrices.  If the skew-Hermitian matrix $A$ has $n_{-}$
eigenvalues on the negative part of the imaginary axis, $n_0$ zeroes
as eigenvalues and $n_{+}$ eigenvalues on the positive part of the
imaginary axis, we denote this by
$\inertia(A) = i (n_{-}, n_0, n_{+})$.
\begin{corollary}
  Let $A \in \mathbb{C}^{n \times n}$ be a skew-Hermitian matrix, and
  let $\inertia(A) = i (n_{-}, n_0, n_{+})$, such that neither
  $n_{-} = n$, nor $n_{+} = n$, and $n_1 = \min(n_{-}, n_{+})$,
  $n_2 = \max(n_{-}, n_{+}) - n_1$.  Then, there exists a unitary
  matrix $Q \in \mathbb{C}^{n \times n}$ such that
  \begin{equation}
    M = Q^{\ast} A Q = \begin{bmatrix}
      0 & 0 & 0 & \hphantom{-}0 \\
      0 & 0 & 0 & -Y^{\ast} \\
      0 & 0 & X & -Z^{\ast} \\
      0 & Y & Z & \hphantom{-}W
    \end{bmatrix},
    \label{5.1}
  \end{equation}
  where $Y \in \mathbb{C}^{n_1 \times n_1}$ is nonsingular and lower
  antitriangular, $W \in \mathbb{C}^{n_1 \times n_1}$ and
  $X \in \mathbb{C}^{n_2 \times n_2}$ are skew-Hermitian, and
  $Z \in \mathbb{C}^{n_1 \times n_2}$.  Then, either
  $\inertia(X) = i (n_2, 0, 0)$ or $\inertia(X) = i (0, 0, n_2)$.
  \label{cor5.2}
\end{corollary}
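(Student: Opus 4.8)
The plan is to reduce the skew-Hermitian case to the Hermitian case already settled in Theorem~\ref{thm5.1}, using the elementary observation that $H \assgn iA$ is Hermitian whenever $A$ is skew-Hermitian. Indeed, $H^\ast = (iA)^\ast = \overline{i}\,A^\ast = (-i)(-A) = iA = H$. The hypotheses $n_{-} \neq n$ and $n_{+} \neq n$ are precisely the statement that $H$ is not definite (if $n_{-} = n$ then $H$ is positive definite, and if $n_{+} = n$ then $H$ is negative definite), so that Theorem~\ref{thm5.1} genuinely applies to $H$.

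First I would track how the inertia transforms. If $A v = i\mu v$ with $\mu \in \mathbb{R}$, then $H v = i(i\mu) v = -\mu v$, so an eigenvalue of $A$ on the positive imaginary axis ($\mu > 0$) becomes a negative eigenvalue of $H$, and vice versa, while zero eigenvalues are preserved. Hence
\[
  \inertia(H) = (n_{+}, n_0, n_{-}),
\]
and consequently the minimum and the maximum of the two nonzero counts are unchanged: the integers $n_1 = \min(n_{-}, n_{+})$ and $n_2 = \max(n_{-}, n_{+}) - n_1$ computed for $A$ coincide with those for $H$. Theorem~\ref{thm5.1} then supplies a unitary $Q$ and blocks $Y_H, X_H, W_H, Z_H$ (with $Y_H$ nonsingular lower antitriangular, $W_H$ Hermitian, $X_H$ Hermitian and definite) putting $Q^\ast H Q$ into the Hermitian block antitriangular form.

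Next I would recover $A = -iH$ by multiplying through by $-i$, setting $Y \assgn -iY_H$, $X \assgn -iX_H$, $W \assgn -iW_H$, and $Z \assgn -iZ_H$. Scaling by $-i$ preserves both nonsingularity and the lower antitriangular shape of $Y_H$, so $Y$ is again nonsingular and lower antitriangular, and a one-line check gives that $X$ and $W$ are skew-Hermitian (for example $X^\ast = (-iX_H)^\ast = iX_H^\ast = iX_H = -X$). The only point requiring care is that the off-diagonal blocks come out with the signs recorded in~(\ref{5.1}): since $-Y^\ast = -(-iY_H)^\ast = -iY_H^\ast$ and $-Z^\ast = -iZ_H^\ast$, the $(2,4)$ and $(3,4)$ blocks of $-i\,(Q^\ast H Q)$ match $-Y^\ast$ and $-Z^\ast$ exactly, while the $(4,2)$ and $(4,3)$ blocks give $Y$ and $Z$; this is what produces the asymmetric sign pattern of~(\ref{5.1}).

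Finally, the inertia of $X$ follows from the definiteness of $X_H$. If $X_H$ is positive definite, its eigenvalues $\mu > 0$ become eigenvalues $-i\mu$ of $X = -iX_H$ lying on the negative imaginary axis, giving $\inertia(X) = i(n_2, 0, 0)$; if $X_H$ is negative definite, they move to the positive imaginary axis, giving $\inertia(X) = i(0, 0, n_2)$. I expect the main obstacle to be purely bookkeeping: keeping the two sign conventions straight --- the swap of positive and negative inertia induced by multiplication by $i$, and the asymmetric placement of the minus signs in~(\ref{5.1}) --- rather than any substantive difficulty, since all the structural content is inherited from Theorem~\ref{thm5.1}.
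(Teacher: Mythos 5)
Your proposal is correct and follows essentially the same route as the paper: apply Theorem~\ref{thm5.1} to $H = iA$, multiply the resulting block form by $-i$, and read off the skew-Hermitian structure and the one-sided imaginary inertia of $X$ from the definiteness of the Hermitian block. Your version is in fact slightly more thorough than the paper's, since you also verify that the hypotheses $n_-\neq n$, $n_+\neq n$ make $H$ indefinite and you track the sign pattern of the off-diagonal blocks explicitly.
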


\begin{proof}
  If the previous Theorem~\ref{thm5.1} is applied to $H = iA$, it holds
  \begin{equation}
    \widetilde{M} = Q^{\ast} H Q = \begin{bmatrix}
      0 & 0 & 0 & 0 \\
      0 & 0 & 0 & \widetilde{Y}^{\ast} \\
      0 & 0 & \widetilde{X} & \widetilde{Z}^{\ast} \\
      0 & \widetilde{Y} & \widetilde{Z} & \widetilde{W}
    \end{bmatrix}.
    \label{5.2}
  \end{equation}
  If (\ref{5.2}) is multiplied by $-i$ we obtain (\ref{5.1}) by
  setting $M = -i \widetilde{M}$, $Y \assgn -i\widetilde{Y}$,
  $X \assgn -i\widetilde{X}$, $Z \assgn -i\widetilde{Z}$,
  $W \assgn -i\widetilde{W}$.  It is easy to see that $X = -X^{\ast}$
  and $W = -W^{\ast}$.  According to Theorem~\ref{thm5.1}, the matrix
  $\widetilde{X}$ is definite, therefore all eigenvalues of $X$,
  \begin{displaymath}
    \lambda_k(X) = \lambda_k(-i \widetilde{X}) = -i \lambda_k(\widetilde{X}),
  \end{displaymath}
  are placed at the same part of the imaginary axis.
\end{proof}
%
%
\section*{Acknowledgments}
\label{sec:A}
%
%
We wish to express our gratitude for an insightful anonymous review,
which motivated us to develop the effective computational procedure
described in Section~\ref{sec:3}.  Also, we are grateful to Vedran
Novakovi\'{c} for his help with developing the algorithms and
improving the presentation of this text.
%
%

\begin{thebibliography}{1}
\expandafter\ifx\csname url\endcsname\relax
  \def\url#1{\texttt{#1}}\fi
\expandafter\ifx\csname urlprefix\endcsname\relax\def\urlprefix{URL }\fi

\bibitem{Bujanovic-Kressner-16}
Z.~Bujanovi{\'{c}}, D.~Kressner, A block algorithm for computing antitriangular
  factorizations of symmetric matrices, Numer. Algorithms 71~(1) (2016) 41--57.

\bibitem{Gohberg-Lancaster-Rodman-05}
I.~Gohberg, P.~Lancaster, L.~Rodman, Indefinite Linear Algebra and
  Applications, Birkh{\"{a}}user, Basel, 2005.

\bibitem{Laudadio-Mastronardi-VanDooren-16}
T.~Laudadio, N.~Mastronardi, P.~{Van Dooren}, Numerical issues in computing the
  antitriangular factorization of symmetric indefinite matrices, Appl. Numer.
  Math. 116 (2016) 204--214.

\bibitem{Mastronardi-VanDooren-13}
N.~Mastronardi, P.~{Van Dooren}, The antitriangular factorization of symmetric
  matrices, {SIAM} J. Matrix Anal. Appl. 34~(1) (2013) 173--196.

\bibitem{Pestana-Wathen-14}
J.~Pestana, A.~J. Wathen, The antitriangular factorization of saddle point
  matrices, {SIAM} J. Matrix Anal. Appl. 35~(2) (2014) 339--353.
\end{thebibliography}

\end{document}